\newcommand{\N}{\mathbb{N}}
\newcommand{\Z}{\mathbb{Z}}
\newcommand{\R}{\mathbb{R}}
\newcommand{\C}{\mathbb{C}}
\newcommand{\T}{\mathbb{T}}
\newcommand{\D}{\mathbb{D}}
\newcommand{\supp}{\mathrm{supp\,}}             
\theoremstyle{plain}
\newtheorem*{theorem*}{Theorem}
\newtheorem{theorem}{Theorem}
\newtheorem{proposition}[theorem]{Proposition}
\theoremstyle{definition}
\theoremstyle{remark}
\title[Contact forms with large systolic ratio in dimension three]{Contact forms with large systolic ratio in dimension three}
\author[Abbondandolo]{Alberto Abbondandolo}
\author[Bramham]{Barney Bramham}
\author[Hryniewicz]{Umberto L. Hryniewicz}
\author[Salom\~ao]{Pedro A. S. Salom\~ao}
\begin{document}

\maketitle

\begin{abstract}
The systolic ratio of a contact form on a closed three-manifold is the quotient of the square of the shortest period of closed Reeb orbits by the contact volume. We show that every co-orientable contact structure on any closed three-manifold is defined by a contact form with arbitrarily large systolic ratio. This shows that the many existing systolic inequalities in Finsler and Riemannian geometry are not purely contact-topological phenomena.
\end{abstract}

\section*{Introduction and main result}

Given a Riemannian metric $g$ on a closed surface $S$, we denote the length of a shortest non-constant closed geodesic by $\ell_{\min}(S,g)$ and consider the scaling invariant ratio
\[
\rho(S,g) := \frac{\ell_{\min}(S,g)^2}{\mathrm{area}(S,g)},
\]
where $\mathrm{area}(S,g)$ denotes the Riemannian area. It is a classical question in systolic geometry to find upper bounds, and possibly optimal ones, for this ratio. When $S$ is not the 2-sphere, the fact that $S$ is not simply connected allows one to bound $\rho(S,g)$ from above by the ratio
\[
\rho_{\rm nc}(S,g) :=\frac{\mathrm{sys}_1(S,g)^2}{\mathrm{area}(S^2,g)},
\]
where $\mathrm{sys}_1(S,g)$ denotes the length of a shortest non-contractible curve on $(S,g)$, which is of course a closed geodesic. When $S$ is the 2-torus $\T^2$, a classical result of Loewner from the end of the 1940s states that $\rho_{\rm nc}(\T^2,\cdot)$ is maximised by the flat metric corresponding to the lattice generated by two sides of an equilateral triangle in $\R^2$. The corresponding maximal value of $\rho_{\rm nc}$ is $2/\sqrt{3}$.
Shortly after, Pu \cite{pu52} considered the case of the projective plane $\R\mathbb{P}^2$ and showed that $\rho_{\rm nc}(\R\mathbb{P}^2,\cdot)$ is maximized by the standard constant curvature metric, with maximal value $\pi/2$. In both cases, the metrics maximizing $\rho_{\rm nc}$ have no contractible closed geodesics, so these metrics maximize also the ratio $\rho$.

For an arbitrary closed non-simply connected surface $S$, the ratio $\rho_{\rm nc}(S,\cdot)$ is bounded from above by the non-optimal universal constant $2$, as shown by Gromov in  \cite[Proposition 5.1.B]{gro83}, but the supremum is in general not achieved. Actually, in \cite[Theorem 0.1.A]{gro83} Gromov proved a far reaching generalization of this result by showing that for any essential $n$-dimensional closed Riemannian manifold $(M,g)$, the quotient
\[
\rho_{\rm nc}(M,g) := \frac{\mathrm{sys}_1(M,g)^n}{\mathrm{vol}(M,g)}
\]
has an upper bound which depends only on the dimension $n$. Here, a non-simply connected closed oriented manifold $M$ is said to be essential if its fundamental class is non-zero in the homology of the Eilenberg-MacLane space $K(\pi_1(M),1)$. This condition generalizes asphericity, that is the fact that all homotopy groups of degree larger than one vanish, and characterizes manifolds for which the ratio $\rho_{\rm nc}$ is bounded from above, see \cite{bab93}.

Going back to surfaces, from the fact mentioned above we deduce that the number $2$ is an upper bound also for $\rho(S,\cdot)$, when $S\neq S^2$. In the case of the two-sphere $S^2$, the first upper bound for $\rho(S^2,\cdot)$ was found by Croke in \cite{cro88} and later improved by several authors, the best one so far being the bound 32 found by Rotman in \cite{rot06}. 

We conclude that $\rho(S,\cdot)$ is always bounded from above when $S$ is a closed surface, and there is an upper bound which is independent of $S$. The ratios $\rho$ and $\rho_{\rm nc}$ generalize to Finsler metrics, by replacing the Riemannian area by the Holmes-Thompson area. Recent results of \`Alvarez Paiva, Balacheff and Tzanev allow to extend the Riemannian bounds to the Finsler setting: The value of the supremum might get larger (and sometimes it does get larger, as in the case of $\T^2$, for which the Loewner metric does not maximize $\rho$ and $\rho_{\rm nc}$, even locally, among Finsler metrics) but is in any case finite. See \cite[Theorems V and IV]{apbt16}.

The next natural generalization of the ratio $\rho$ is to the setting of contact geometry. We recall that a contact form $\alpha$ on a 3-manifold $M$ is a smooth 1-form such that $\alpha\wedge d\alpha$ is a volume form. The kernel of $\alpha$ is a plane distribution and is called the contact structure defined by $\alpha$. In this paper, contact structures are always assumed to be co-orientable, that is, defined by a global contact form. If two contact forms $\alpha_1,\alpha_2$ on $M$ define the same contact structure then the volume forms $\alpha_1\wedge d\alpha_1$ and $\alpha_2\wedge d\alpha_2$ define the same orientation. This orientation is said to be induced by the contact structure, and in this paper all contact $3$-manifolds are oriented by the contact structure. A contact form $\alpha$ induces a nowhere vanishing vector field $R_{\alpha}$ on $M$, which is defined by the conditions
\[
\imath_{R_{\alpha}} d\alpha = 0 , \qquad \alpha(R_{\alpha})=1,
\]
and is called the Reeb vector field for $\alpha$.  Any Reeb vector field on a closed 3-manifold $M$ admits periodic orbits, as proved by Taubes in \cite{tau07}, and we denote by $T_{\min}(M,\alpha)$ the minimum among the periods of all periodic orbits of $R_{\alpha}$. Then it is natural to define
\[
\rho(M,\alpha):= \frac{T_{\min}(M,\alpha)^2}{\mathrm{vol}(M,\alpha\wedge d\alpha)}.
\]
We will refer to this quantity as the systolic ratio of $(M,\alpha)$. It is scaling invariant, because if we multiply $\alpha$ by a non-zero real number $c$ then both $T_{\min}^2$ and the volume get multiplied by $c^2$. The inverse of this quantity appears in \cite{apb14} under the name of systolic volume of $(M,\alpha)$. 

Let $F$ be a smooth Finsler metric on the closed surface $S$. The unit cotangent sphere bundle $S^*_FS$ is the space of cotangent vectors having norm 1 with respect to the dual Finsler metric $F^*$. The canonical Liouville 1-form $p\, dq$ of $T^* S$ restricts to a contact form $\alpha_F$ on $S_F^*S$. Two different Finsler metrics $F$ and $F'$ induce contact forms $\alpha_F$ and $\alpha_{F'}$ which correspond to the same contact structure, once $S_F^*S$ and $S_{F'}^*S$ are identified by means of the radial projection.
The flow of the corresponding Reeb vector field $R_{\alpha_F}$ is precisely the geodesic flow of $F$, once this is read on the cotangent bundle by the Legendre transform. Therefore, $\ell_{\min}(S,F)=T_{\min}(\alpha_F)$. Moreover, the volume of $S^*_FS$ with respect to $\alpha_F \wedge d\alpha_F$ is, essentially by definition, $2\pi$ times the Holmes-Thompson area of $(S,F)$. We conclude that
\[
\rho(S^*_FS,\alpha_F) = \frac{\rho(S,F)}{2\pi},
\]
and the systolic ratio of a contact form is a genuine generalization of the corresponding notion from Riemannian and Finsler geometry. All of this generalizes to higher dimensions, but here we restrict our attention to three-dimensional contact manifolds. 

Seeing Riemannian and Finsler geometry in the larger contact setting is often fruitful. On the one hand, results about existence and multiplicity of Riemannian and Finsler closed geodesics have often natural generalizations to Reeb flows, see e.g.\ \cite{hms15}, and the same holds for some statements about the topological entropy of geodesic flows, see e.g.\ \cite{ms11,fls15,alv16}. On the other hand, techniques from contact geometry have been recently found to be useful to address systolic questions in Riemannian and Finlser geometry, such as the local systolic maximality of Zoll metrics, see \cite{apb14,abhs17,abhs17b}. 

It is therefore a natural question to ask whether the systolic ratio of contact forms inducing a given contact structure on a closed three-manifold also has uniform upper bounds. The purpose of this paper is to give a negative answer to this question. More precisely, we shall prove the following:

\begin{theorem*}\label{main}
Let $\xi$ be a contact structure on a closed $3$-manifold $M$. For every $c>0$ there exists a contact form $\alpha$ satisfying $\ker\alpha=\xi$ and 
\[
\rho(M,\alpha)\geq c.
\]
\end{theorem*}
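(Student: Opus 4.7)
The strategy is to reduce the global problem to a local construction on a solid torus, then insert this local model into $(M,\xi)$ by a surgery that preserves the contact structure.

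\emph{Local model.} On $D^2\times S^1$ with coordinates $(r,\phi,\theta)$, consider contact forms $\alpha = a(r)\,d\theta + b(r)\,d\phi$, with $a$ a smooth function of $r^2$ and $b(r) = r^2\tilde b(r^2)$ for smooth extension at $r=0$. A direct computation gives $\alpha\wedge d\alpha = (a'b - ab')\,dr\wedge d\theta\wedge d\phi$ and Reeb vector field $R_\alpha = (a'\partial_\phi - b'\partial_\theta)/(a'b - ab')$, tangent to each torus $\{r=r_0\}$ with linear slope $-b'(r_0)/a'(r_0)$. A closed Reeb orbit at $r_0$ exists precisely when this slope is some rational $n/m$ in lowest terms, in which case its period equals $2\pi|ma(r_0)+nb(r_0)|$. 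I would choose the profile $(a,b)$ so that the slope function is monotone and only hits rationals $n/m$ with large $m+n$ — for instance, by keeping the slope close to an irrational with good Diophantine properties — making the minimum period of closed Reeb orbits arbitrarily large while the contact volume $(2\pi)^2\int(a'b - ab')\,dr$ stays controlled.

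\emph{Insertion into $(M,\xi)$.} Given any contact form $\alpha_0$ defining $\xi$, pick a closed Reeb orbit $\gamma$ of $\alpha_0$ (which exists by Taubes' theorem \cite{tau07}) and a standard tubular neighborhood $N\cong D^2\times S^1$ of $\gamma$ on which $\alpha_0$ has normal form $a_0(r)\,d\theta + b_0(r)\,d\phi$ by the Reeb-orbit neighborhood theorem. Replace $\alpha_0|_N$ by the local model $\alpha_1$, arranging that $\alpha_1 = f\alpha_0$ near $\partial N$ for some positive function $f$, so that the resulting global form $\alpha$ is smooth on $M$ and satisfies $\ker\alpha = \xi$. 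By matching profiles so that $\partial N$ is Reeb-invariant on both sides, every closed Reeb orbit of $\alpha$ lies entirely in $N$ or entirely in $M\setminus N$, giving
\[
T_{\min}(M,\alpha) \geq \min\bigl(T_{\min}(N,\alpha_1),\, T_{\min}(M\setminus N,\alpha_0)\bigr).
\]

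\emph{Main obstacle.} A one-shot insertion bounds $\rho(M,\alpha)$ a priori by $T_{\min}(M\setminus N,\alpha_0)^2/\mathrm{vol}(M\setminus N,\alpha_0)$, which is a fixed constant depending on the ambient form; thus a single plug alone cannot push the systolic ratio past any prescribed threshold. To exceed any $c > 0$, I expect one must arrange for the model to cover essentially all of $M$, for instance via a contact-compatible open book decomposition that presents $M$ as a page-times-$S^1$ region glued to a thin binding neighborhood on which the model is installed; alternatively, one can iterate the insertion over a finite family of disjoint solid tori whose union exhausts $M$ up to a set of negligible volume. The hardest technical point will be verifying that the boundary data of the model can always be chosen to match $(a_0,b_0)$ while retaining the large-period/small-volume features; once this matching is established, the estimate above yields $\rho(M,\alpha) \geq c$.
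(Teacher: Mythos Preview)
Your high-level plan---build a solid-torus ``plug'' with high systolic ratio and then, via an open book, arrange that such plugs occupy almost all of $M$---is precisely the paper's strategy. The genuine gap is in the local step: rotationally invariant forms $a(r)\,d\theta + b(r)\,d\phi$ cannot serve as the plug. The Diophantine idea only controls the invariant tori $\{r=\mathrm{const}>0\}$; the core orbit at $r=0$ always closes up, with period $2\pi|a(0)|$, and since the slope $-b'/a'$ is continuous with prescribed limits at $r=0$ and at the boundary it sweeps an entire interval of rationals regardless. More to the point, the first return map of any such form to a transverse disk is a \emph{radial twist} $re^{i\phi}\mapsto re^{i(\phi+\Delta(r))}$, for which one computes $\sigma_{\varphi,\lambda_0}(0)=\int_0^{r_0} r\,\Delta(r)\,dr$ and $\mathrm{CAL}(\varphi)=2\pi\int_0^{r_0} r^3\,\Delta(r)\,dr$. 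Driving $\mathrm{CAL}$ close to $-\pi r_0^2$ forces $\Delta$ to be large somewhere; since $\Delta=0$ near $\partial(r_0\D)$, the intermediate value theorem then produces many fixed circles $\Delta(r)\in 2\pi\Z$ and low-period circles $\Delta(r)\in \tfrac{2\pi}{k}\Z$, and a direct check shows the action at some of these fixed circles is negative. Thus radial twists cannot simultaneously satisfy (b1)--(b4) of Proposition~\ref{prop1}, which via Proposition~\ref{prop2} are exactly what is needed to get a plug with volume $<\epsilon$ and $T_{\min}\ge 1$ matching a fixed boundary form. The paper's plug (Proposition~\ref{prop0}) is built instead from a genuinely non-rotational compactly supported disk map taken from \cite{abhs17b}, and that two-dimensional construction is the heart of the argument.

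On the global side your open-book suggestion is correct and is what the paper carries out, but this too requires substantial work you have not sketched: one must first produce a defining contact form adapted to a supporting open book whose monodromy and first return map are the identity outside a region of arbitrarily small $d\alpha$-area of the page (Proposition~\ref{prop_nice_contact_form}). Only after this preparation are there large ``trivial'' collar neighbourhoods of the binding into which the plugs can be inserted to absorb the contact volume while keeping $T_{\min}$ bounded away from zero. Your single-tube insertion around one Reeb orbit, even iterated, does not by itself arrange this.
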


When applied to the cotangent sphere bundle of an arbitrary closed surface $S$, the above theorem has the following consequence: If $T$ is any positive number, then there exists a fiberwise starshaped domain $A\subset T^*S$ (i.e.\ a connected open neighborhood of the zero-section with smooth boundary $\partial A$ such that for every $q\in S$ the set $\partial A \cap T_q^* S$ is a closed curve in $T^*_q S\setminus \{0\}$ which is transverse to the radial direction) with volume 1 (with respect to the standard symplectic form $dp\wedge dq$ of $T^*S$) and such that every closed orbit of the Reeb vector field on $\partial A$ determined by the restriction of the Liouville form $p\, dq$ has period larger than $T$. When $A$ is fiberwise convex then $\partial A$ is the unit sphere bundle of some Finsler metric and the results of \`Alvarez Paiva, Balacheff and Tzanev mentioned above prevent this phenomenon. This shows that convexity plays an essential role in systolic inequalities.

In the special case of the tight three-sphere $(S^3,\xi_{\rm std})$, that is the standard unit sphere in $\R^4$ endowed with coordinaten $(x_1,y_1,x_2,y_2)$ and with the contact structure $\xi_{\rm std}$ which is given by the kernel of the contact form
\[
\alpha_0 = \frac{1}{2} \sum_{j=1}^2 (x_j \, dy_j - y_j\, dx_j) \Big|_{S^3}
\]
the unboundedness of the systolic ratio was proved in~\cite{abhs17b}. The argument in \cite{abhs17b} starts with the construction of a special symplectomorphism of the disk with a good lower bound on actions of periodic points and with a suitable negative value of the Calabi invariant (see Section \ref{plug} and references therein for the definition of these notions). This disk-map is then embedded as a return map to a disk-like global surface of section of some Reeb flow on $(S^3,\xi_{\rm std})$ in a way that there is a precise ``dictionary'': Actions of periodic points correspond to periods of closed Reeb orbits and the Calabi invariant corresponds to the contact volume, up to explicit additive constants.

The tight three-sphere is very simple from a contact-topological point of view, since it admits the trivial supporting open book decomposition with unknotted binding, disk-like pages and trivial monodromy (see Section \ref{special} below and references therein for the definition of these notions). Moreover, the construction of the disk-map in~\cite{abhs17b} uses the special symmetries of the disk. In order to deal with a general contact structure $\xi$ on a general orientable closed three-manifold $M$ we argue in the following way.   First we use the fact that by a theorem of Giroux \cite{gir02} $\xi$ is supported by an open book decomposition of $M$ to construct a contact form $\alpha$ which is adapted to the open book decomposition and is such that the supports of the monodromy and return maps are contained in a region of small contact area, as visualized in Figure~1. The construction of this special contact form is explained in Section \ref{special}, where the necessary background about open book decompositions is also recalled. The next step is to construct suitable contact forms on a solid torus which are standard near the boundary, have a small contact volume and no closed Reeb orbits with small period. These are constructed in Section \ref{plug}, by building on results from \cite{abhs17b}. Finally, the latter solid tori are used as plugs to modify the contact form $\alpha$ constructed in Section \ref{special}. Indeed, these plugs can be inserted in the region spanned by the large portions of the pages on which the monodromy and first return map are the identity: Their effect is to eat up volume without creating short periodic orbits. In this way we can reduce the contact volume as much as we wish, while keeping $T_{\min}$ bounded away from zero, and hence making the systolic ratio arbitrarily large. This final step is performed in Section \ref{theproof}.

\paragraph{\bf Acknowledgments.} 
The research of A.\ Abbondandolo and B.\ Bramham is supported by the SFB/TRR 191 ``Symplectic Structures in Geometry, Algebra and Dynamics'', funded by the Deutsche Forschungsgemeinschaft. U. Hryniewicz thanks the Floer Center for Geometry (Bochum) for its warm hospitality, and acknowledges the generous support of the Alexander von Humboldt Foundation. U. Hryniewicz is also supported by CNPq grant 309966/2016-7. P. Salom\~ao is supported by FAPESP grant 2016/25053-8 and CNPq grant 306106/2016-7.

\begin{figure}\label{fig1}
\begin{center}
\includegraphics[width=90mm]{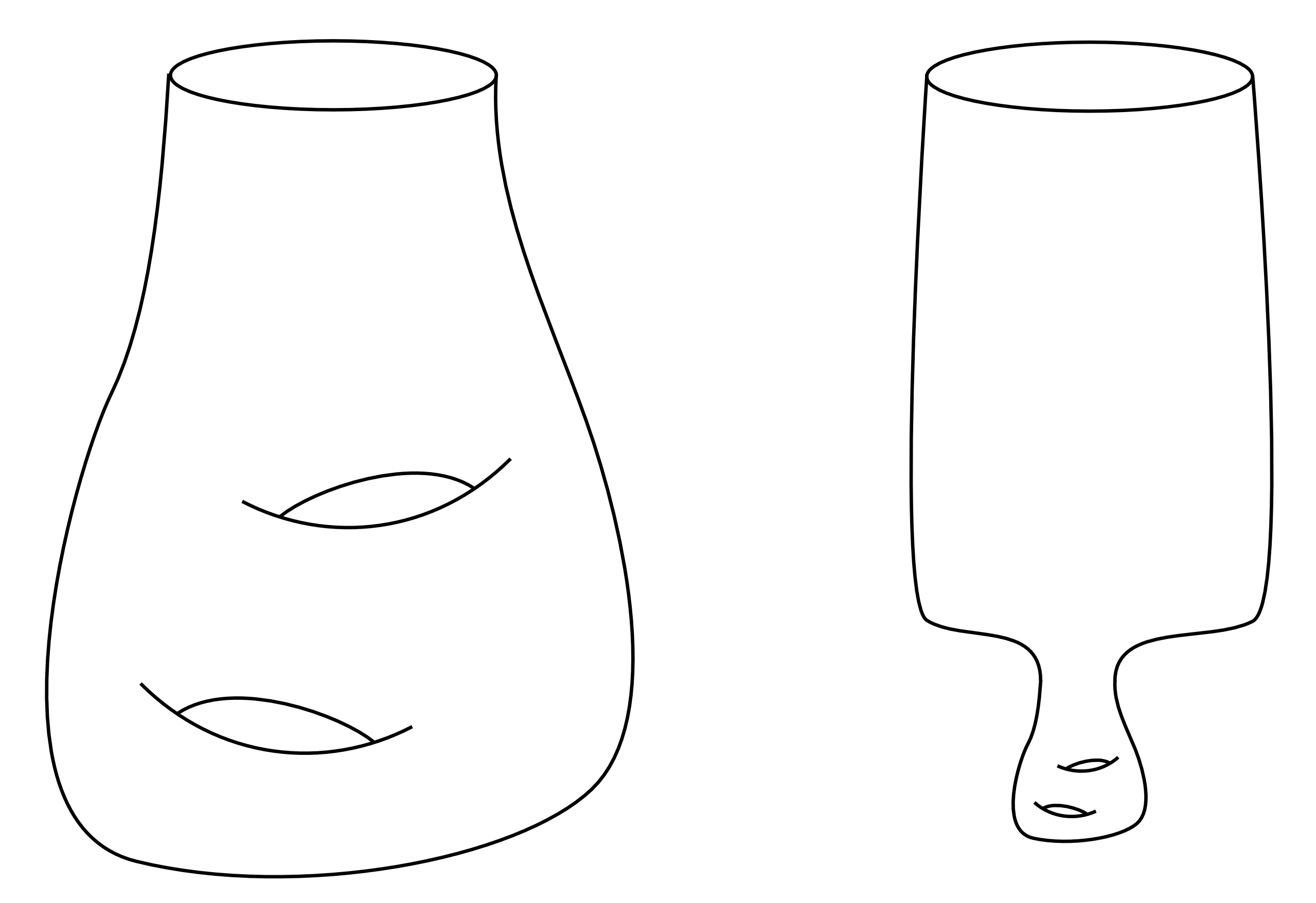}
\caption{\small{On the left a typical page as one would picture it in an open book decomposition. On the right, the same page on an open book supporting a contact structure where the topology, and the supports of the monodromy and return maps are squeezed in a region with small contact area. This leaves lots of space to embed solid tori with high systolic ratio.}}
\end{center}
\end{figure}

\section{A special contact form}
\label{special}

We recall that a global surface of section for a smooth flow $\phi^t$ on a 3-dimensional manifold $M$ is a compact surface $S$ smoothly embedded in $M$ whose boundary $\partial S$ is $\phi^t$-invariant and such that all orbits which are not contained in $\partial S$ meet the surface transversally infinitely many times in the future and in the past. The corresponding first return time function is the smooth function
\[
\tau: S \setminus \partial S \rightarrow (0,+\infty), \qquad \tau(p) := \inf \{ t>0 \mid \phi^t(p)\in S\},
\] 
and the corresponding first return map is the smooth diffeomorphism
\[
\varphi: S \setminus \partial S \rightarrow S \setminus \partial S, \qquad \varphi(p) := \phi^{\tau(p)}(p).
\]
When $\phi^t$ is the Reeb flow of a contact form $\alpha$, the transversality of the flow to $S\setminus \partial S$ implies that the restriction of $d\alpha$ to $S\setminus \partial S$ is an area form. The exactness of $d\alpha$ implies that in the Reeb case the boundary of $S$ is necessarily non-empty. Since Reeb vector fields are non-singular, the finitely many circles forming the boundary of $S$ are periodic orbits. Moreover, it is well known that the contact volume of $M$ coincides with the integral of $\tau$ on $S$, once this surface is  equipped with the area form given by the restriction of $\alpha$:
\begin{equation}
\label{vol-tau}
\mathrm{vol}\,(M,\alpha\wedge d\alpha) = \int_S \tau\, d\alpha.
\end{equation}
See e.g.\ \cite[Lemma 3.5]{abhs17b} for the easy proof.

\begin{proposition}\label{prop_nice_contact_form}
Let $M$ be a closed connected 3-manifold with a contact structure $\xi$. Then there is an embedded compact surface $S\subset M$ with the following property: For every $\epsilon>0$ there exists a contact form $\alpha$ on $M$ satisfying $\xi=\ker \alpha$ such that $S$ is a global surface of section for the Reeb flow of $\alpha$  and:
\begin{enumerate}[(i)]
\item If we orient $S$ by $d\alpha$ and give $\partial S$ the boundary orientation, we have that 
\[
\int_C \alpha = 1 
\]
for every connected component $C$ of $\partial S$. In particular, 
\[
 \int_S d\alpha = \int_{\partial S} \alpha = \ell,
\]
where $\ell\geq 1$ denotes the number of circles forming the boundary of $S$.

\item The first return time function $\tau$ of the Reeb flow of $\alpha$ extends to a smooth function on $S$ and the corresponding first return map $\varphi$ extends to a smooth diffeomorphism of $S$ onto itself.

\item There exists an open tubular neighborhood $U$ of $\partial S$ in $S$ such that the support of $\varphi$ is contained in $S\setminus U$ and
\[
\int_{S\setminus U} d\alpha < \epsilon.
\]

\item $\|\tau-1\|_{\infty} < \epsilon$ and $\tau$ is constantly equal to $1$ on the neighborhood $U$ from (iii).
\end{enumerate}
\end{proposition}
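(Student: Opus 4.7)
The plan is to construct $\alpha$ from a Giroux supporting open book decomposition of $(M,\xi)$ via a Thurston--Winkelnkemper-type construction, tuned so that the Liouville structure on the page has $d\lambda$-mass concentrated in a thin neighborhood of the binding while the monodromy has support confined to a small region disjoint from that neighborhood. The page itself will be the global surface of section $S$.

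First, invoke Giroux's theorem \cite{gir02} to obtain an open book decomposition $(B,\pi)$ of $M$ supporting $\xi$; let $S$ be the closure of one page, so $\partial S = B$ has $\ell\geq 1$ components, and let $\varphi_0:S\to S$ be a monodromy representative equal to the identity near $\partial S$. By conjugating $\varphi_0$ with a diffeomorphism of $S$ equal to the identity near $\partial S$, we may force its support $K$ into an arbitrarily small compact subset of the interior. Next, pick a primitive $\lambda$ of a positive area form on $S$ such that in a fixed collar of $\partial S$ the form has the standard shape giving $\int_C \lambda = 1$ on each component $C$, and so by Stokes $\int_S d\lambda = \ell$. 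Via Moser's trick, we may arrange that the $d\lambda$-mass is concentrated in a tubular neighborhood $U$ of $\partial S$ disjoint from $K$, with $\int_{S\setminus U} d\lambda < \epsilon$; by Moser's lemma we then replace $\varphi_0$ with a symplectomorphism of $(S,d\lambda)$ in the same smooth isotopy class rel $\partial S$.

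Assemble $\alpha$ as follows. On the mapping torus $M\setminus B \cong (S\times\R)/(x,t)\sim(\varphi_0(x),t-1)$ take
\[
\alpha = \lambda_t + \kappa(t)\, dt,
\]
where $\lambda_t = (1-t)\lambda + t\varphi_0^*\lambda$ interpolates $\lambda$ to $\varphi_0^*\lambda$, equals $\lambda$ outside $K\times[0,1]$, and $\kappa>0$ equals $1$ outside the interpolation window. Using $\varphi_0^*d\lambda = d\lambda$, a direct computation gives $\alpha\wedge d\alpha = (\kappa\, d\lambda + \lambda\wedge\varphi_0^*\lambda)\wedge dt$, which is positive provided $\kappa$ dominates $|\lambda\wedge\varphi_0^*\lambda|/d\lambda$. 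Near the binding, glue in the standard Thurston--Winkelnkemper local model on a solid torus neighborhood $\mathcal{N}(B)$, arranged so that $\int_C \alpha = 1$ on each binding component, Reeb orbits at each radius close up after one page cycle in unit time, and the return map extends smoothly to the identity at $\partial S$. On the locus where $\lambda_t\equiv\lambda$ and $\kappa\equiv 1$, the Reeb field is simply $\partial_t$, so the first return map is the identity and $\tau\equiv 1$; choosing $U$ inside this identity locus delivers (i), (ii), the support statement in (iii), and $\tau|_U\equiv 1$ from (iv).

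The principal obstacle is securing $\|\tau-1\|_\infty < \epsilon$ globally in (iv), since the contact condition bounds $\kappa$ from below in terms of the path while $\tau$ is essentially controlled by $\kappa$ along the Reeb flow. This is overcome by exploiting the mass concentration: since $d\lambda$ has total mass less than $\epsilon$ on $K$, a symplectic change of coordinates there lets us take $\lambda|_K$ pointwise of size $O(\sqrt{\epsilon})$. Conjugating $\varphi_0$ so that its support lies in a small Euclidean ball keeps $|d\varphi_0|$ uniformly bounded (since the rescaling factors from the conjugation cancel), so $|\varphi_0^*\lambda|$ is likewise $O(\sqrt{\epsilon})$ on $K$; hence $|\lambda\wedge\varphi_0^*\lambda|/d\lambda = O(\epsilon)$, the choice $\kappa = 1 + O(\epsilon)$ is contact, and $\|\tau-1\|_\infty = O(\epsilon)$ follows. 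Shrinking $\epsilon$ at the start if necessary completes the verification of (iv) and the proof.
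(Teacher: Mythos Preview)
Your overall strategy matches the paper's: start from a Giroux open book, take a page $S$ as the global surface of section, and build a Thurston--Winkelnkemper contact form in which the $d\alpha$-area of $S$ is concentrated in a collar of the binding while the monodromy is supported in the complementary region of tiny area. Items (i)--(iii) and the condition $\tau|_U\equiv 1$ then follow as you indicate.

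The gap is in your argument for $\|\tau-1\|_\infty<\epsilon$. You assert that because $\int_K d\lambda<\epsilon$, a symplectic change of coordinates makes $\lambda|_K$ of size $O(\sqrt\epsilon)$, and that conjugating $\varphi_0$ into a ``small Euclidean ball'' keeps $|d\varphi_0|$ bounded. Neither step is justified. The support $K$ of the monodromy is in general not a disk---for a single Dehn twist it is an annulus about a curve that may be homologically nontrivial in $S$, and conjugation cannot change its topology. More importantly, making $d\lambda$ small on $K$ does not make the \emph{chosen} primitive $\lambda$ pointwise small there: the size of $\lambda$ on $K$ is governed by its periods over cycles in $K$, which are global data about $[\lambda]\in H^1(S)$ and are untouched by redistributing $d\lambda$-mass or by symplectic changes of chart. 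Without controlling those periods, the estimate $|\lambda\wedge\varphi_0^*\lambda|/d\lambda=O(\epsilon)$ does not follow, and with it your bound on $\kappa$ and on $\tau$ collapses.

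The paper circumvents this by introducing a global scaling parameter rather than attempting local mass redistribution. It fixes once and for all a primitive $\eta$ of a fixed area form on $S$ and sets
\[
\tilde\alpha_s \;=\; dx + s\bigl((1-\beta(x))\eta+\beta(x)h^*\eta\bigr)
\]
on the mapping torus. Then $\alpha_s\wedge d\alpha_s$ is proportional to $s\,dx\wedge\omega + s^2\cdot(\text{bounded})$, so the contact condition holds for all small $s$ with no hypothesis on $\eta$ or on the topology of $\mathrm{supp}\,h$, and $dx(R_s)\to 2\pi(1+\delta)$ uniformly, forcing $\tau\to(1+\delta)^{-1}$ on $S\setminus V$. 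The small-area condition (iii) is now a \emph{consequence} of the scaling, not an input. The price is that the scaling destroys the normalization $\int_C\alpha=1$; most of the paper's proof is the construction of the form on the binding neighborhood $V$ via a planar curve $\gamma(r)=f(r)+ig(r)$ satisfying the monotonicity conditions (B1)--(B5), which interpolates from the $s$-scaled form back to the required normalization while keeping $1/(1+\delta)\leq\tau\leq 1$ on $S\cap V$. Your sketch of the binding model (``the standard Thurston--Winkelnkemper local model'') does not address this interpolation, which is where the actual work lies.
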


In order to prove this proposition, we need to recall some facts about open book decompositions. See \cite[Section 2]{etn06} for more details. An open book decomposition of an oriented closed $3$-manifold $M$ is a pair $(\Pi,L)$, where $L\subset M$ is a smooth link and 
\[
\Pi:M\setminus L \to \R/2\pi\Z
\]
is a smooth (locally trivial) fibration which near $L$ is a normal angular coordinate. More precisely, there exists a compact neighborhood $N$ of $L$ and a diffeomorphism $N \simeq L \times \D$ such that $L \simeq L \times \{0\}$ and $\Pi|_{N \setminus L}$ is represented as
\[
(\lambda ,r e^{ix})\mapsto x,
\] 
where $\lambda$ belongs to $L$ and $(r,x)\in (0,1]\times \R/2\pi \Z$ are polar coordinates on the punctured closed unit disc $\D\setminus \{0\}\subset \C$. 

The link $L$ is called the binding and the fibers of $\Pi$ are called the pages of the open book decomposition. The closure of each page is a Seifert surface for $L$, that is, a compact smoothly embedded surface with boundary $L$. The co-orientation of the pages given by the map $\Pi$ and the ambient orientation induce an orientation of the pages. The link $L$ inherits the boundary orientation.

Let $S$ be  the closure in $M$ of the page $\Pi^{-1}(0)$.  The coordinates introduced above induce coordinates $(\lambda,r) \in \partial S\times [0,1]$ on the compact neighborhood $N\cap S$ of $\partial S$ in $S$, where $[0,1]$ is seen as the intersection of $\D$ with the positive real half-axis. 

The vector field $\partial_x$ on $N \simeq L \times \D$ is smooth, vanishes at $L\simeq L \times \{0\}$ and generates the group of rotations
\begin{equation}
\label{rotations}
\bigl(t,(\lambda,\rho e^{ix}) \bigr) \mapsto (\lambda,\rho e^{i(x+t)}).
\end{equation}
Let $Z$ be a smooth vector field on $M$ which coincides with $\partial_x$ on $N$ and is positively transverse to all the pages, meaning that $d\Pi \circ Z >0$ on $M\setminus L$. The set of such vector fields is obviously convex and in particular path connected.

The surface $S$ is a global surface of section for the flow of $Z$. The form (\ref{rotations}) of the flow of $Z$ on $N$ implies that the corresponding first return map is the identity on the set $N \cap (S\setminus \partial S)$, and hence it extends smoothly to the boundary and gives us an orientation preserving diffeomorphism 
\[
h: S \rightarrow S,
\]
which is the identity on $N \cap S$. The diffeomorphism $h$ is called the monodromy map of the open book decomposition which depends on the choice of the vector field $Z$. However as the set of such vector fields is path connected the isotopy class of $h$ within the space of diffeomorphisms of $S$ that are the identity in a neighborhood of $\partial S$ is uniquely determined.   

The mapping torus of $h$ is the smooth 3-manifold with boundary 
\begin{equation}
\label{mapping-torus}
S(h) := \left([0,2\pi]\times S\right)/ \sim \qquad (2\pi,q) \sim (0,h(q)).
\end{equation}
It is equipped with the fibration 
\begin{equation}
\label{fibration}
x: S(h) \to \R/2\pi\Z,  
\end{equation}
given by the projection onto the first component. The piece $([0,2\pi]\times (S\cap N))/\sim$ is diffeomorphic to $\R/2\pi\Z \times (S\cap N)$, because $h$ is the identity on $S\cap N$, and is equipped with coordinates 
\begin{equation}
\label{coordinate}
(x,\lambda,r) \in \R/2\pi\Z \times \partial S \times[0,1].
\end{equation}
In particular, the boundary of $S(h)$ is the product $\R/2\pi \Z \times \partial S$, which is a union of finitely many 2-tori.

The flow of $Z$ preserves the family of leaves in the neighborhood $N$ and is transverse to the interiors of the leaves.   Thus we may renormalise $Z$ outside of $N$ so that its flow preserves the leaves also in $M\backslash N$.    Then the flow of $Z$ induces a diffeomorphism  between the interior of $S(h)$ and $M\setminus L$, which identifies the fibrations $x$ and $\Pi$. Moreover, $M$ is obtained from $S(h)$ by collapsing each boundary torus onto a circle. More precisely, we can identify $M$ with the quotient

\begin{equation}
\label{Macca}
\bigl(S(h) \sqcup (\partial S\times\D)\bigr)/\sim
\end{equation}
where $(x,\lambda,r) \sim (\lambda,r e^{ix})$. Here, the smooth structure near the link is induced by the identification $\partial S \times \D \simeq N$.

\begin{proof}[Proof of Proposition \ref{prop_nice_contact_form}]
By a theorem of Giroux \cite[Theorem 3]{gir02} (see also \cite[Theorem 4.6]{etn06} for a detailed proof), the contact structure $\xi$ is supported by some open book decomposition $(\Pi,L)$ of $M$: This means that $\xi=\ker \alpha$, where $\alpha$ is a contact form such that:
\begin{enumerate}[({A}1)]
\item the restriction of $d\alpha$ to each page is a positive area form;
\item $\alpha$ is positive on $L$.
\end{enumerate}

As smoothly embedded compact surface $S$ we choose the closure of the page $\Pi^{-1}(0)$. We freely draw from the notation about open book decompositions established above; in particular, $N$ is the closed tubular neighborhood of $L=\partial S$ with diffeomorphism $N\simeq L\times \D$ and $h:S \rightarrow S$ is a choice of monodromy map which is supported in $S\setminus N$.

If $\alpha$ and $\alpha'$ are contact forms satisfying (A1) and (A2), then $\ker \alpha$ and $\ker \alpha'$ are isotopic (see \cite[Proposition 2]{gir02} and \cite[Proposition 3.18]{etn06}). In this case, Gray's stability theorem (see \cite[Theorem 2.2.2]{gei08}) implies the existence of a diffeomorphism bringing $\ker \alpha$ into $\ker \alpha'$. Since the properties (i), (ii), (iii) and (iv) that we wish our contact form to have are preserved by the action of a diffeomorphism, it is enough to find a contact form $\alpha$ which satisfies them together with the conditions (A1) and (A2) above, i.e. without explicitly assuming that $\ker\alpha$ is the given contact structure.

Denote by $C_1,\dots,C_{\ell}$ the connected components of $\partial S$. The tubular neighborhood $N\cap S$ of $\partial S$ in $S$ is the union of $\ell$ closed annuli $A_j$ carrying coordinates $(\theta,r)\in \R/2\pi \Z \times [0,1]$, where $\theta$ is an orientation preserving parametrization of $C_j$. Here the boundary component $C_j$ corresponds to $r=0$.
The fact that $C_j$ is given the boundary orientation from $S$ implies that $2r\,d\theta\wedge dr$ is a positive 2-form on each half-open annulus $A_j\setminus \partial S$. Choose a 2-form $\Omega$ on $S$ such that $\Omega>0$ on $S\setminus L$, 
\[
\int_S\Omega = 2\pi \ell
\]
and 
\[
\Omega = 2r\, d\theta \wedge dr \qquad \mbox{on each closed annulus } A_j':= \R/2\pi \Z \times [0,\rho]\subset A_j,
\]
for a suitably small number $\rho\in (0,1]$.

Perhaps after shrinking $\rho$, we can assume that there exists a primitive $\eta$ of $\Omega$ agreeing with $(1-r^2)d\theta$ on each $A_j'$. To see this, choose any $1$-form $\sigma_0$ on $S$ which agrees with $(1-r^2)d\theta$ near the $C_j$'s. Then $\Omega-d\sigma_0$ is a $2$-form with support contained in $S\setminus\partial S$. Since
\[
\int_S \bigl( \Omega - d\sigma_0 \bigr) = 2\pi \ell - \int_{\partial S} \sigma_0 = 2\pi \ell - 2\pi \ell = 0,
\]
we can invoke de Rham's theorem to find a $1$-form $\sigma_1$ supported in $S\setminus\partial S$ and such that $\Omega-d\sigma_0=d\sigma_1$. Then $\eta = \sigma_0+\sigma_1$ has the desired property. 

Let $\beta:\R\to[0,1]$ be a smooth function satisfying $\beta(0)=0$, $\beta(2\pi)=1$ and $\supp(\beta')\subset (0,2\pi)$. The family of $1$-forms 
\begin{equation}\label{form_alpha_tilde_epsilon}
\tilde\alpha_s := dx + s \bigl( (1-\beta(x))  \eta + \beta(x) h^*\eta \bigr), \qquad s\in (0,+\infty),
\end{equation}
is well defined on the mapping torus $S(h)$ of $h$ which is defined in (\ref{mapping-torus}). Here $x$ denotes the fibration (\ref{fibration}), which corresponds to the fibration $\Pi$ under the identification of the interior of $S(h)$ with $M\setminus L$.

From now on, we shall identify $M$ with the manifold 
\[
\left( S(h) \sqcup \bigsqcup_{j=1}^{\ell} \bigl( \R/2\pi\Z \times \rho\D \bigr) \right) / \sim
\]
as in (\ref{Macca}), where a point in $\R/2\pi \Z\times A_j'\subset S(h)$ with coordinates $(x,\theta,r)$, is identified with the point $(\theta,r e^{ix})$ in the corresponding copy of $\R/2\pi\Z \times \rho \D$. With this identification, the set 
\[
V := \bigsqcup_{j=1}^{\ell} \bigl( \R/2\pi\Z \times \rho \D \bigr)
\]
is a compact tubular neighborhood of $L$ in $M$.

We will think of $\tilde{\alpha}_s$ as defined in $M\setminus L \simeq S(h)\setminus \partial S(h)$. The fact that $h$ is the identity on $A_j$ and the form of $\eta$ on $A_j'$ imply that
\begin{equation}
\label{alphaonV}
\tilde{\alpha}_s=dx + s (1-r^2)\, d\theta \qquad \mbox{on }V\setminus L,
\end{equation}
with respect to the standard coordinates $(\theta,r,x)\in \R/2\pi \Z \times (0,\rho] \times \R/2\pi \Z$ on $V \setminus L$.

Fix $\delta>0$. There exists $s_1>0$, depending only on $\rho$ and $\delta$, with the following properties: If $s\in(0,s_1)$ then there are smooth functions $f,g:[0,\rho] \rightarrow [0,+\infty)$ defining a curve in the complex plane
\[
\gamma: [0,\rho] \rightarrow \C, \qquad \gamma(r) = f(r) + i g(r),
\]
satisfying:
\begin{enumerate}[({B}1)] 
\item $\gamma(r) = 1 + is(1-r^2)$ on $[r_1,\rho]$, for some $r_1 \in (0,\rho)$; 
\item $g'<0$ on $(0,\rho]$.
\item $\gamma([0,r_0]) \subset \{x+iy \mid x+y=1+\delta\}$ for some $r_0\in (0,r_1)$ such that
\[
g(r_0)-g(\rho)=g(r_0)-s(1-\rho^2) \leq 2\delta.
\]
Moreover, $\gamma(r) = r^2 + i (1+\delta-r^2)$ when $r$ is close enough to $0$.
\item The derivative of the argument of $\gamma$, that is the function $$ \frac{g'f-f'g}{f^2+g^2} $$ is negative on $(0,\rho]$.
\item The derivative of the argument of $\gamma'$, that is the function $$ \frac{g''f'-f''g'}{(f')^2+(g')^2} $$ is non-positive on $[0,\rho]$.
\end{enumerate}

\begin{figure}\label{fig2}
\begin{center}
\includegraphics[width=70mm]{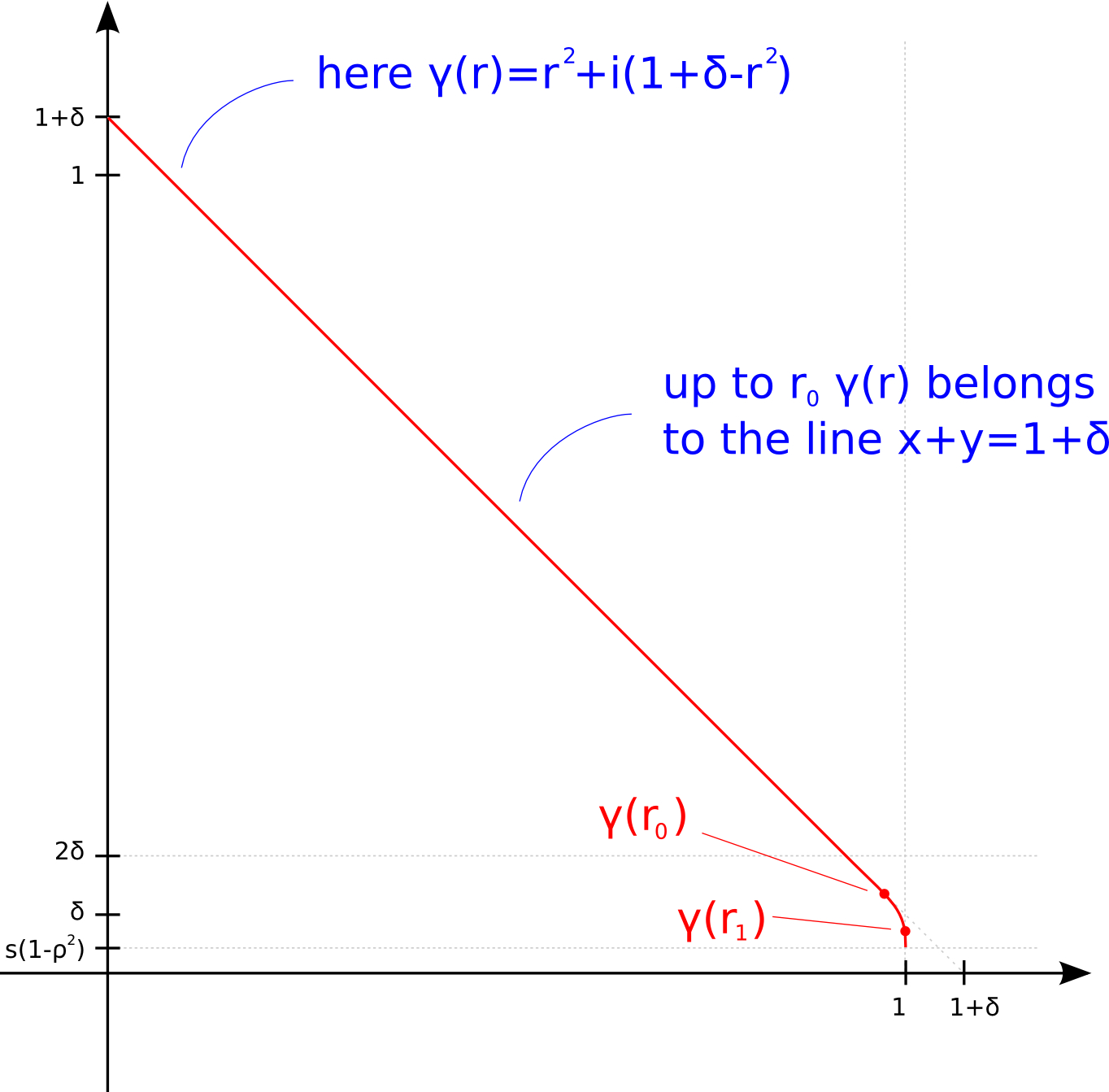}
\caption{\small{Functions $f,g$ with the above properties exist.}}
\end{center}
\end{figure}

Finally, we define a smooth $1$-form $\alpha_s$ on $M$ by 
\[
\alpha_s = \left\{ \begin{array}{ll} \displaystyle{\frac{\tilde{\alpha}_s}{2\pi(1+\delta)}} & \mbox{on } M\setminus V, \\ \\
\displaystyle{\frac{f(r)\,dx + g(r)\,d\theta}{2\pi(1+\delta)}} & \mbox{on } V. \end{array} \right.
\]
The smoothness of $\alpha_{s}$ near the boundary of $V$ follows from (\ref{alphaonV}) and (B1).
By (B3) we have the formula
\begin{equation}
\label{alphaonL}
\alpha_{s} = \frac{r^2 \, dx + (1+\delta-r^2)\, d\theta}{2\pi(1+\delta)} \qquad \mbox{near $L$},  
\end{equation}
which shows that $\alpha_{s}$ is smooth in a neighborhood of $L$. Therefore, $\alpha_{s}$ is a smooth 1-form on $M$ for every $s\in (0,s_1)$.

We claim that there exists $s_2 \in (0,s_1)$, depending on $\delta,\rho,h,\eta,\beta$, such that if $s \in (0,s_2)$ then $\alpha_s$ is a contact form and satisfies conditions (A1) and (A2). Let us prove this claim. By (B3) the 1-form $\alpha_{s}$ has the following expression on $L$:
\begin{equation}
\label{alphaonL}
\alpha_{s} = \frac{1}{2\pi} \, d\theta \qquad \mbox{on } L \simeq \R/2\pi \Z \times \{0\} \subset \R/2\pi \Z \times \rho \D ,
\end{equation}
which shows that condition (A2) holds for every $s\in (0,s_1)$. On $V$ we have
\begin{equation}
\label{dalphaonV}
d\alpha_s = \frac{f'dr \wedge dx-g'd\theta\wedge dr}{2\pi(1+\delta)}.
\end{equation}
The restriction of $d\alpha_{s}$ on the page $\Pi^{-1}(x_0) = x^{-1}(x_0)$ intersected with $V$ is
\[
-\frac{g'd\theta\wedge dr}{2\pi (1+\delta)},
\]
which is a positive area form thanks to (B2) and the positivity of $d\theta\wedge dr$. So the condition (A1) holds on $V$ for every $s\in (0,s_1)$. Moreover,
\[
\alpha_s \wedge d\alpha_s = \frac{f'g-fg'}{4\pi^2 (1+\delta)^2} \ dx\wedge d\theta\wedge dr = \frac{f'g-fg'}{4\pi^2 (1+\delta)^2 r} \ d\theta\wedge (r\, dr \wedge dx).
\]
By the last part of (B3),
\[
\frac{f'g-fg'}{r} =2(1+\delta)
\]
when $r$ is close enough to $0$. Together with (B4), this implies that the smooth function $(f'g-fg')/r$ is strictly positive on $V$. This implies that $\alpha_{s}$ is a contact form on $V$ for every $s\in (0,s_1)$.

Now we analyze $\alpha_s$ on $M\setminus V$. The formula
\[
\alpha_s=\frac{dx+s \bigl((1-\beta)\eta+\beta h^*\eta \bigr)}{2\pi(1+\delta)} \qquad \mbox{on }M\setminus V
\]
gives
\begin{equation}
\label{dalpha}
d\alpha_s = s \, \frac{\beta'dx\wedge (h^*\eta-\eta) +  \omega}{2\pi(1+\delta)} \qquad \mbox{on }M\setminus V,
\end{equation}
where $\omega$ is the smooth 2-form on $M\setminus L$ with kernel $\partial_x$ and whose restriction to the page $\Pi^{-1}(x) \simeq S\setminus \partial S$ is the positive area form
\[
\omega_x := (1-\beta(x))d\eta+\beta(x) h^*d\eta.
\]
The restriction of $d\alpha_{s}$ to the page $\Pi^{-1}(x_0)=x^{-1}(x_0)\simeq S\setminus S$ intersected with $M\setminus V$ is a positive multiple of the positive area form $\omega_{x_0}$, so $\alpha_{s}$ satisfies (A1) on $M\setminus V$ for every $s \in (0,s_1)$. Moreover,
\[
\frac{4\pi^2 (1+\delta)^2}{s} \, \alpha_s \wedge d\alpha_s = dx\wedge \omega + s \ \beta' \eta \wedge dx \wedge h^*\eta  \qquad \mbox{on } M\setminus V.
\]
Since $dx\wedge \omega$ is a volume form on $M\setminus V$, the above formula shows that we can choose $s_2\in(0,s_1]$, depending on the data $h,\eta,\beta$, such that $\alpha_s$ is a contact form on $M\setminus V$ whenever $s\in (0,s_2)$. We conclude that $\alpha_s$ is a contact form satisfying conditions (A1) and (A2), for every $s\in(0,s_2)$. 

The next task is to understand the Reeb flow of $\alpha_s$ and to show it fullfills the requirements (i), (ii), (iii) and (iv) with respect to the surface $S$, when $s$ is small enough. Condition (i) is actually fulfilled for any $s\in (0,s_1)$, due to (\ref{alphaonL}), so we need to focus only on (ii), (iii) and (iv).

Denote by $R_{s}$ the Reeb vector field of $\alpha_{s}$. We start by analysing the flow of $R_{s}$ on $M\setminus V$. Since $\beta'$ is supported in $(0,2\pi)$, $h^* \eta-\eta$ is compactly supported in $S\setminus V$ and $\omega$ restricts to an area form on each page, there is a unique smooth vector field $Y$ on $M$ which is supported in $M\setminus V$, is tangent to the pages and satisfies
\[
\omega(Y,\cdot) = -\beta'(x) (h^*\eta-\eta) \qquad \mbox{on } \Pi^{-1}(x).
\]
By (\ref{dalpha}), we have the identity
\[
\begin{split}
\frac{2\pi(1+\delta)}{s} \ d\alpha_s(\partial_x+Y,\cdot) &= \beta'(h^*\eta-\eta) - \beta'(h^*\eta-\eta)(Y)\, dx + \omega(Y,\cdot) \\ &=\beta'(h^*\eta-\eta) + \omega(Y,Y)\, dx - \beta'(h^*\eta-\eta) = 0,
\end{split}
\]
which shows that the non-vanishing vector field $\partial_x+Y$ is in the kernel of $d\alpha_{s}$. Therefore, the Reeb vector field $R_{s}$ of $\alpha_{s}$  has the form
\begin{equation}\label{form_Reeb_vector_outofV}
R_s = \frac{\partial_x+Y}{\alpha_s(\partial_x+Y)} \qquad \mbox{on } M\setminus V,
\end{equation}
for every $s\in (0,s_2)$. The fact that $Y$ is supported in $M\setminus V$ and the form of $V$ imply that $M\setminus V$ is invariant for the Reeb flow, and hence the same is true for its complement $V$. Moreover, the above formula implies that $R_{s}$ is transverse to the portion of the pages $x^{-1}(x_0)$ lying in $M\setminus V$, and in particular to $S\setminus V$.

Next we study the Reeb flow on $V$. By the form of $\alpha_{s}$ on $V$, $R_{s}$  has the form
\begin{equation}
\label{ReebonV}
R_{s} = 2\pi(1+\delta) \frac{f' \partial_{\theta} - g' \partial_x}{f'g-g'f} \qquad \mbox{on } V.
\end{equation}
The condition (B2) implies that $R_{s}$ is transverse the the portions of the pages $x^{-1}(x_0)$ which lie in $V$, and in particular to $(S\setminus \partial S)\cap V$. 
The expression for $R_{s}$ near $L$ becomes, thanks to (B3), 
\[
R_{s} = 2\pi(\partial_{\theta} + \partial_x) \qquad \mbox{for } (\theta,r,x)\in \R/2\pi \Z \times [0,r_0] \times \R/2\pi \Z.
\]
In particular, since $\partial_x$ vanishes on $L$, 
\[
R_{s} = 2\pi \,\partial_{\theta}\qquad \mbox{on } L,
\]
and the link $L$ consists of periodic orbits of $R_{s}$ of period $1$. 

We conclude that the vector field $R_{s}$ is transverse to the pages $x^{-1}(x_0)$ and leaves the binding $L$ invariant. In particular, in view of the form of $R_s$ near $L$ described above, $S$ is a global surface of section for the Reeb flow, and the first return time function and first return map
\[
\tau: S \setminus \partial S \rightarrow (0,+\infty), \qquad \varphi: S \setminus \partial S \rightarrow S \setminus \partial S
\]
are well defined. By (\ref{ReebonV}), on $(S\setminus \partial S)\cap V$ the first return time function is explicitly given by the formula
\begin{equation}
\label{tau}
\tau = \frac{g'f-f'g}{(1+\delta)g'} \qquad \mbox{on } (S\setminus \partial S)\cap V,
\end{equation}
and the first return time map by
\begin{equation}
\varphi: (r,\theta) \mapsto \left( r,\theta-2\pi \frac{f'(r)}{g'(r)} \right) \qquad \mbox{on } (S\setminus \partial S) \cap V.
\end{equation}
By (B3), we have  
\begin{equation}
\label{vicinobordo}
\tau(r,\theta) = 1 \qquad \mbox{and} \qquad \varphi(r,\theta) = (r,\theta-2\pi) \qquad \forall (r,\theta)\in (0,r_0]\times \R/2\pi \Z.
\end{equation}
This implies that $\tau$ and $\varphi$ extend smoothly to the boundary of $S$ and proves (ii). 

By (\ref{tau}), the restriction of the first return time function $\tau$ to $S\cap V$ depends only on $r$ and we have
\[
\partial_r \tau =  \frac{g(f'g''-g'f'')}{(1+\delta)(g')^2}.
\]
By (B5), this function is non-positive on $[0,\rho]$. Together with (\ref{vicinobordo}) and
\[
\tau(\rho,\theta) = \frac{1}{1+\delta},
\]
where we have used (\ref{tau}) and (B1), we find the bounds
\[
\frac{1}{1+\delta} \leq \tau \leq 1 \qquad \mbox{on } S\cap V,
\]
from which
\begin{equation}
\label{bdontau}
\sup_{S\cap V} |\tau-1| \leq 1 - \frac{1}{1+\delta} = \frac{\delta}{1+\delta} < \delta.
\end{equation}
From (\ref{form_Reeb_vector_outofV}) we have
\begin{equation}
\label{diffo}
dx(R_{s}) = \frac{1}{\alpha_{s}(\partial_x + Y)} \qquad \mbox{on } M\setminus V.
\end{equation}
Since the function 
\[
\alpha_{s}(\partial_x + Y) = \frac{1}{2\pi (1+\delta)} \tilde{\alpha}_{s}(\partial_x + Y) = \frac{1}{2\pi (1+\delta)} \bigl( 1 + s ((1-\beta) \eta(Y) + \beta h^* \eta(Y))\bigr) 
\]
converges to $1/(2\pi(1+\delta))$ for $s\rightarrow 0$ uniformly on $M\setminus V$, by (\ref{diffo}) the function $dx(R_{s})$ converges to $2\pi(1+\delta)$ for $s\rightarrow 0$ uniformly on $M\setminus V$. This implies that $\tau$ converges uniformly to $1/(1+\delta)$ on $S\setminus V$. Together with (\ref{bdontau}), this implies that there exists $s_3\in (0,s_2]$ such that for every $s\in (0,s_3)$ we have
\begin{equation}
\label{bdontau2}
\|\tau-1\|_{\infty} < 2\delta.
\end{equation}
Define $U$ to be the open tubular neighborhood of $L=\partial S$ in $S$ consisting of those $(r,\theta)$ in $S\cap V$ with $0\leq r < r_0$. By (\ref{vicinobordo}), $\tau=1$ and $\varphi=\mathrm{id}$ on $U$. Together with (\ref{bdontau2}), this proves that (iv) holds, by choosing $\delta\leq \epsilon/2$.

The identity (\ref{dalpha}) gives us a constant $C$, depending only on $\beta$, $\eta$ and $h$, such that
\[
\int_{S\setminus V} d\alpha_{s} \leq C s \qquad \forall s\in (0,s_1).
\]
We now fix a $s_4\in (0,s_3]$ such that $C s_4<\delta$.
By (\ref{dalphaonV}), the $d\alpha_{s}$-area of the annulus in $A_j'$ corresponding to the values of $r$ in the interval $[r_0,\rho]$ is
\[
\frac{1}{2\pi(1+\delta)} \int_{\R/2\pi \Z \times [r_0,\rho]} (-g') \, d\theta\wedge dr = \frac{1}{1+\delta} \bigl( g(r_0) - g(\rho) \bigr) \leq \frac{2\delta}{1+\delta}< 2\delta,
\]
where the first upper bound follows from (B3). If $s$ is in the interval $(0,s_4)$, the above two inequalities imply that
\[
\int_{S\setminus U} d\alpha_{s} = \int_{S\setminus V} d\alpha_{s} + \frac{1}{2\pi(1+\delta)} \int_{\R/2\pi \Z \times [r_0,\rho]} (-g') \, d\theta\wedge dr < 3\delta,
\]
and the conclusion (iii) follows by choosing $\delta=\epsilon/3$.
\end{proof}

\section{Construction of the plug}
\label{plug}

The aim of this section is to show how some results from \cite{abhs17b} can be used in order to build a contact form on a solid torus such that the contact volume is small and all closed orbits of the corresponding Reeb flow have large period. In the next section, this solid torus will be used as a plug to modify the special contact form which we constructed in the previous section. Here is the statement which summarizes the properties of the plug.

\begin{proposition}
\label{prop0}
Fix positive numbers $r$ and $\epsilon$. Let $\lambda$ be a primitive of the standard area form $dx\wedge dy$ on the closed disc $r\mathbb{D}$. Then there exists a smooth contact form $\beta$ on the solid torus $r\mathbb{D} \times \R/\Z$ with the following properties:
\begin{enumerate}[({a}1)]
\item $\beta = \lambda + ds$ in a neighborhood of $\partial(r \D) \times \R/\Z$, where $s$ denotes the coordinate on $\R/ \Z$; in particular, the Reeb vector field $R_{\beta}$ of $\beta$ coincides with $\partial_s$ near the boundary of $r\D \times \R/\Z$, and its flow is globally well-defined;
\item the contact form $\beta$ is smoothly isotopic to the contact form $\lambda + ds$ on $r\D \times \R/ \Z$ through a path of contact forms which agree with $\lambda + ds$ in a neighborhood of $\partial (r\D) \times \R/ \Z$;
\item all the closed orbits of $R_{\beta}$ have period at least 1;
\item $\mathrm{vol}\,(r\D \times \R/ \Z, \beta\wedge d\beta) < \epsilon$.
\end{enumerate}
\end{proposition}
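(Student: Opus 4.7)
The strategy is to realise $\beta$ via a standard suspension/mapping torus construction from a carefully chosen compactly supported symplectomorphism $\psi$ of $r\D$, thereby transferring the dictionary ``actions of periodic points correspond to periods of Reeb orbits, Calabi invariant corresponds to contact volume'' from \cite{abhs17b} to the solid torus setting. The main obstacle is the production of the disc map with the right combined action/Calabi properties; everything else is a translation.

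The key input would be to invoke (after an affine rescaling to disc radius $r$) the main technical construction of \cite{abhs17b}: a smooth symplectomorphism $\psi: r\D \to r\D$ such that $\psi$ equals the identity on a neighborhood $U$ of $\partial(r\D)$; $\psi$ is smoothly isotopic to the identity through symplectomorphisms which are identity on $U$; letting $F: r\D \to \R$ be the primitive of $\psi^*\lambda - \lambda$ vanishing on $U$, every $n$-periodic point $p$ of $\psi$ satisfies $\sum_{k=0}^{n-1} F(\psi^k(p)) \geq n$; and the Calabi invariant $\cal(\psi) = \int_{r\D} F\,dx\wedge dy$ can be prescribed as negative as one pleases.

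Given such a $\psi$, the next step is to pick a Hamiltonian isotopy $\{\psi_s\}_{s\in[0,1]}$ from the identity to $\psi$, with each $\psi_s$ equal to the identity on $U$, and to apply the standard suspension formula to produce a contact form $\tilde\beta$ on $r\D\times\R$ whose Reeb flow has time-$1$ return map to each disc fiber equal to $\psi$ and which reduces to $\lambda + ds$ wherever the isotopy is constant. Since $\psi|_U=\mathrm{id}$, the form $\tilde\beta$ descends under $(x,s)\sim(x,s+1)$ to a contact form $\beta$ on $r\D\times\R/\Z$ agreeing with $\lambda + ds$ on $U \times \R/\Z$, giving (a1). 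Property (a2) is obtained by running the same suspension procedure on the $1$-parameter family of isotopies interpolating between $\{\psi_s\}$ and the constant identity isotopy, producing a path of contact forms all standard near $\partial(r\D)\times\R/\Z$.

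For (a3), the dictionary identifies closed orbits of $R_\beta$ in $(r\D\setminus U)\times\R/\Z$ with periodic points of $\psi$, the period of an $n$-periodic point being precisely its action $\sum_{k=0}^{n-1} F(\psi^k(p))$; closed orbits meeting $U \times \R/\Z$ are $\partial_s$-circles of period exactly $1$. The action lower bound from the first step thus yields period $\geq 1$ for every closed Reeb orbit. For (a4), the formula~(\ref{vol-tau}) applied with the disc fiber as global surface of section yields
\[
\vol(r\D \times \R/\Z,\beta \wedge d\beta) = \cal(\psi) + C,
\]
where $C$ is an explicit constant depending only on $\lambda$ and $r$; choosing $\cal(\psi) < \epsilon - C$ in the first step makes the volume smaller than $\epsilon$, completing the proof.
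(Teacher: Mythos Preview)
Your overall strategy matches the paper's: choose a compactly supported area-preserving disc map, suspend it to a Reeb flow on the solid torus via \cite[Proposition~3.1]{abhs17b} (Proposition~\ref{prop2} here), and read off periods and volume from the action function and the Calabi invariant. But the dictionary you invoke is off by an additive shift, and that error forces an impossible requirement on the disc map.

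In the suspension over $\R/\Z$ the first return time is $\tau = 1 + F$, not $F$; this is Proposition~\ref{prop2}(c3) with $L=1$, and it is also forced by your own stipulation that $\beta = \lambda + ds$ on $U\times\R/\Z$, where the return time is visibly $1$ while $F\equiv 0$. Hence the Reeb period through an $n$-periodic point $p$ of $\psi$ is
\[
n + \sum_{k=0}^{n-1} F\bigl(\psi^k(p)\bigr),
\]
not $\sum_{k=0}^{n-1} F(\psi^k(p))$. Your requirement that every $n$-periodic point have action sum $\geq n$ is therefore both unnecessary and impossible: every point of $U$ is a fixed point with $F = 0$, so the inequality fails already for $n=1$.

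What is actually needed, and what Proposition~\ref{prop1} (i.e.\ \cite[Proposition~2.27]{abhs17b}) supplies, is a map $\varphi$ together with an integer $n$ such that: all fixed points have $F\geq 0$; every non-fixed periodic point has minimal period at least $n$; $F \geq -1 + \tfrac{1}{n}$ pointwise; and $\cal(\varphi) < -\pi r^2 + \epsilon$. Then a fixed point gives Reeb period $1+F \geq 1$, while a $k$-periodic point with $k\geq n$ gives period $k + \sum F \geq k\cdot\tfrac{1}{n} \geq 1$. This two-case split is the crux of (a3) and is absent from your outline. A secondary omission: Proposition~\ref{prop2} is formulated for the rotationally symmetric primitive $\lambda_0$, so the paper first builds the plug relative to $\lambda_0$ and then adds an exact correction $d(\chi u)$ (with $du = \lambda - \lambda_0$ and $\chi$ a boundary cutoff) to pass to the given $\lambda$, checking that this leaves both the Reeb vector field and the volume form unchanged.
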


Before discussing the proof of the above proposition, we recall the definition of the Calabi invariant for compactly supported area-preserving diffeomorphisms of the plane.

Endow the plane $\R^2$ with the standard area form $dx\wedge dy$ and let $\mathrm{Diff}_c(\R^2,dx\wedge dy)$ be the group of compactly supported area-preserving diffeomorphisms of $\R^2$. Let $\varphi$ be an element of $\mathrm{Diff}_c(\R^2,dx\wedge dy)$ and let $\lambda$ be a primitive of $dx\wedge dy$.
To $\varphi$ and $\lambda$ we can associate the unique compactly supported smooth function $\sigma_{\varphi,\lambda}: \R^2 \rightarrow \R$ satisfying
\[
\varphi^* \lambda - \lambda = d\sigma_{\varphi,\lambda}.
\]
This function is called the action of $\varphi$ with respect to $\lambda$. Its value at fixed points of $\varphi$ is independent of the choice of the primitive $\lambda$, and so is its integral
\[
\mathrm{CAL}(\varphi) = \int_{\R^2} \sigma_{\varphi,\lambda} \, dx\wedge dy,
\]
which is called the Calabi invariant of $\varphi$. The Calabi invariant defines a homomorphism from $\mathrm{Diff}_c(\R^2,dx\wedge dy)$ onto $\R$.

Let $\lambda_0$ be the following radially symmetric primitive of $dx\wedge dy$
\[
\lambda_0 := \frac{1}{2} ( x\, dy - y \, dx).
\]
We shall make use of the following result:

\begin{proposition}
\label{prop1}
Fix positive numbers $r$ and $L$. For every $\epsilon>0$ there exists a positive integer $n$ and an area-preserving diffeomorphism
\[
\varphi\in  \mathrm{Diff}_c(\mathrm{int}(r\mathbb{D}),dx\wedge dy)
\]
such that the following properties hold:
\begin{enumerate}[({b}1)]
\item $\sigma_{\varphi,\lambda_0} \geq - L + L/n $;
\item $\mathrm{CAL}(\varphi)< - L\pi r^2 + \epsilon$;
\item all the fixed points of $\varphi$ have non-negative action;
\item all the periodic points of $\varphi$ which are not fixed points have period at least $n$.
\end{enumerate}
\end{proposition}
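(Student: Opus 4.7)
The map $S:r\D\to\D$, $S(x,y)=(x/r,y/r)$, satisfies $(S^{-1})^*\lambda_0=r^2\lambda_0$. Hence if $\psi\in\Diff_c(\mathrm{int}(\D),dx\wedge dy)$ then its conjugate $\varphi=S^{-1}\circ\psi\circ S$ belongs to $\Diff_c(\mathrm{int}(r\D),dx\wedge dy)$ and one checks $\sigma_{\varphi,\lambda_0}=r^2(\sigma_{\psi,\lambda_0}\circ S)$ and $\cal(\varphi)=r^4\cal(\psi)$. The conditions (b1)--(b4) for $\varphi$ with constants $(L,\epsilon)$ therefore reduce to the same conditions for $\psi$ with $(L/r^2,\epsilon/r^4)$, so one may assume $r=1$.

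\textbf{A radial twist model.} On $\D$ I would seek $\psi$ as a perturbation of a radial twist $T_\rho(r,\theta)=(r,\theta+\rho(r))$, with $\rho:[0,1]\to\R$ smooth, compactly supported in $(0,1)$, and such that $\rho(0)\in 2\pi\Z$ (necessary for smoothness at the origin). Using $\lambda_0=\tfrac12 r^2\, d\theta$ one obtains
$$
\sigma_{T_\rho}(r)\;=\;\tfrac12 r^2\rho(r)\;+\;\int_r^1 s\,\rho(s)\,ds,
$$
so $\sigma_{T_\rho}\equiv c/2$ on any interval where $\rho\equiv c$. Fixing $n>L\pi/\epsilon$ and setting $c:=-2L+2L/n$, I would take $\rho$ equal to $c$ on a thick middle annulus $[\delta,1-\delta]$, with short monotone transitions joining $\rho(0)=2\pi\lfloor c/(2\pi)\rfloor$ near the origin and $\rho\equiv 0$ near $r=1$. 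A direct estimate of the two boundary contributions, for small enough $\delta$, yields $\sigma_{T_\rho}\geq -L+L/n$ on all of $\D$ and $\cal(T_\rho)<-L\pi+\epsilon$; thus (b1) and (b2) already hold for $T_\rho$.

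\textbf{Breaking short resonances.} The twist $T_\rho$ has an entire invariant circle of period-$p$ orbits at every radius where $p\rho(r)\in 2\pi\Z$; those with $p\leq n-1$ violate (b4), and the $p=1$ circles may violate (b3). I would compose $T_\rho$ with a Hamiltonian diffeomorphism $\Psi$ supported in disjoint thin tubular neighborhoods of each resonant circle of period $\leq n-1$: by the standard local breaking model for area-preserving twist maps, each such circle can be replaced by a finite number of isolated periodic points of the same period, with actions arbitrarily close to that of the parent circle, while all orbits in the neighborhood that are not broken off have period $\geq n$. By designing $\rho$ so that each resonant radius of period $\leq n-1$ satisfies $\sigma_{T_\rho}\geq 0$ (finitely many open shape constraints, compatible with $\rho\approx c$), the new fixed points automatically verify (b3); and taking $\Psi$ sufficiently small in $C^0$ preserves (b1) and (b2).

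\textbf{Main obstacle.} The delicate point is the \emph{quantitative} control of the perturbation step: ensuring simultaneously that (a) every orbit of period $<n$ in each perturbation neighborhood is destroyed, not only those lying on the parent resonant circle; (b) every new fixed point really has action $\geq 0$, rather than merely an action close to that of its parent; and (c) the global lower bound $\sigma\geq -L+L/n$ and Calabi bound $\cal<-L\pi+\epsilon$ survive with their precise numerical gaps. This quantitative breaking-of-invariant-circles analysis is the technical heart of the construction in \cite{abhs17b}, where analogous disk symplectomorphisms are built; I would import (and if necessary mildly refine) their local-perturbation lemmas to supply the needed explicit control and complete the proof.
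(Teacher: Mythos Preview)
The paper's own ``proof'' of this proposition is simply a citation: the case $r=1$ is \cite[Proposition~2.27]{abhs17b}, and the general case follows from the rescaling identity $\sigma_{\varphi_r,\lambda_0}(z)=r^2\sigma_{\varphi,\lambda_0}(z/r)$, $\cal(\varphi_r)=r^4\cal(\varphi)$ for $\varphi_r(z)=r\varphi(z/r)$. Your rescaling reduction is exactly this, so at the level of what is actually established your proposal and the paper coincide.

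What you add is a sketch of how the unit-disk construction in \cite{abhs17b} might go. This is more than the paper attempts, and as you yourself say, the substance lies in the perturbation lemmas you would import from \cite{abhs17b}. One concrete issue to flag in your outline: in any radial twist $T_\rho$ the origin is a fixed point with action
\[
\sigma_{T_\rho}(0)=\int_0^1 s\,\rho(s)\,ds,
\]
which in your model (with $\rho\approx c=-2L+2L/n$ on $[\delta,1-\delta]$) is approximately $c/2=-L+L/n<0$, so (b3) already fails at the center before any circle-breaking. Forcing $\sigma(0)\ge 0$ while keeping $\sigma(\delta)=c/2$ requires, since $\sigma'(r)=\tfrac12 r^2\rho'(r)$, a total variation of $\rho$ of order $\delta^{-2}$ on $[0,\delta]$, hence a large number of additional resonant circles crowding the center; your ``finitely many open shape constraints'' then interact in a way that is not obviously satisfiable. (Incidentally, smoothness of $T_\rho$ at the origin requires only that $\rho$ be a smooth function of $r^2$, not that $\rho(0)\in 2\pi\Z$.) These are precisely the quantitative difficulties that the explicit construction in \cite{abhs17b} is designed to handle, so your deferral to that reference is in the end the same move the paper makes.
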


This result is proved in \cite[Proposition 2.27]{abhs17b} for $r=1$. The general case follows by a simple rescaling argument, using that if $\varphi$ is in $\mathrm{Diff}_c(\R^2,dx\wedge dy)$ then the rescaled map
\[
\varphi_r(z) := r \varphi \left( \frac{z}{r} \right) 
\]
is also in $\mathrm{Diff}_c(\R^2,dx\wedge dy)$ and 
\[
\sigma_{\varphi_r,\lambda_0}(z) = r^2 \sigma_{\varphi,\lambda_0}\left( \frac{z}{r} \right), \qquad  \mathrm{CAL}(\varphi_r)= r^4 \, \mathrm{CAL}(\varphi).
\]

The last ingredient which we need is the following result from \cite{abhs17b}[Propo\-sition 3.1], which allows us to lift an area preserving diffeomorphism of a disc to a Reeb flow on a solid torus. 

\begin{proposition}
\label{prop2}
Fix positive numbers $r$ and $L$.  Let $\varphi\in \mathrm{Diff}_c(\mathrm{int}(r\mathbb{D}),dx\wedge dy)$ and assume that the function
\[
\tau:= \sigma_{\varphi,\lambda_0} + L
\]
is positive on $r\D$. Then there exists a smooth contact form $\beta$ on the solid torus $r\D \times \R/L \Z$ with the following properties:
\begin{enumerate}[({c}1)]
\item $\beta = \lambda_0 + ds$ in a neighborhood of $\partial(r \D) \times \R/L \Z$, where $s$ denotes the coordinate on $\R/L \Z$; in particular, the Reeb vector field $R_{\beta}$ of $\beta$ coincides with $\partial_s$ near the boundary of $r\D \times \R/L \Z$, and its flow is globally well-defined;
\item the surface $r\D \times \{0\}$ is transverse to the flow of $R_{\beta}$, and the orbit of every point in $r\D \times \R/L \Z$ intersects $r\D \times \{0\}$ both in the future and in the past;
\item the first return map and the first return time of the flow of $R_{\beta}$ associated to the surface $r\D\times \{0\} \cong r\D$ are the map $\varphi$ and the function $\tau$;
\item $\mathrm{vol}\,(r\D \times \R/L \Z, \beta\wedge d\beta) = L \,\pi r^2+ \mathrm{CAL}(\varphi)$;
\item the contact form $\beta$ is smoothly isotopic to $\lambda_0 + ds$ on $r\D \times \R/L \Z$ through a path of contact forms which agree with $\lambda_0 + ds$ in a neighborhood of $\partial (r\D) \times \R/L \Z$.
\end{enumerate}
\end{proposition}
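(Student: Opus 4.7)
The plan is to realize $\beta$ as the pullback, under a boundary-trivial diffeomorphism, of a canonical contact form $\tilde\beta$ on the twisted symplectic mapping cylinder
\[
M_\varphi^\tau := (r\D \times \R)/\sim, \qquad (z, s) \sim (\varphi(z), s - \tau(z)).
\]

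\textbf{Step 1 (descent of the standard contact form).} The key algebraic observation is that $\lambda_0 + ds$ and $\partial_s$ on $r\D \times \R$ are both preserved by $T(z, s) := (\varphi(z), s - \tau(z))$. Since $\varphi^*\lambda_0 = \lambda_0 + d\sigma_{\varphi,\lambda_0}$ and $d\tau = d\sigma_{\varphi,\lambda_0}$ (because $L$ is constant), one computes
\[
T^*(\lambda_0 + ds) = \lambda_0 + d\sigma_{\varphi,\lambda_0} + ds - d\tau = \lambda_0 + ds,
\]
and $T_* \partial_s = \partial_s$ directly. Since $\tau > 0$ uniformly on $r\D$, the $\Z$-action generated by $T$ is free and properly discontinuous, so $M_\varphi^\tau$ is a smooth $3$-manifold, and $\tilde\beta := \lambda_0 + ds$ descends to a contact form on it with Reeb field $\partial_s$. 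The image of $r\D \times \{0\}$ embeds in $M_\varphi^\tau$ as a global surface of section for the Reeb flow; iterating the relation, $(z, t) \sim (\varphi^n(z), t - \sum_{i=0}^{n-1}\tau(\varphi^i(z)))$, so the smallest positive $t$ with $[z, t] \in r\D \times \{0\}$ is $t = \tau(z)$ with return point $\varphi(z)$, giving (c3). Near $\partial(r\D)$, where $\varphi = \mathrm{id}$ and $\tau \equiv L$, the relation degenerates to $(z, s) \sim (z, s - L)$ and $M_\varphi^\tau$ is canonically the boundary collar of $r\D \times \R/L\Z$ with $\tilde\beta = \lambda_0 + ds$.

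\textbf{Step 2 (identification with $r\D \times \R/L\Z$).} Since $\mathrm{int}(r\D)$ is simply connected, $\varphi$ is Hamiltonian-isotopic to the identity through a compactly supported isotopy $\{\varphi_u\}_{u \in [0,1]}$. I would use this isotopy to build a diffeomorphism $\Phi : r\D \times \R/L\Z \to M_\varphi^\tau$ that is the identity in a boundary collar and that preserves $r\D \times \{0\}$ pointwise. Such a $\Phi$ exists because $M_\varphi^\tau$ is a solid torus (mapping torus of a diffeomorphism isotopic to $\mathrm{id}$); concretely it has the schematic form $\Phi(z, s) = [\varphi_{\psi(s)}(z), k(z, s)]$, where $\psi$ is built from the isotopy and $k$ is determined by an ODE in $s$ that compensates for the $z$-dependent shift $\tau - L$ so that $\Phi$ descends modulo $L\Z$. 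Setting $\beta := \Phi^*\tilde\beta$, property (c1) follows from $\Phi = \mathrm{id}$ near the boundary, (c2) from the transversality of $\partial_s$ to $r\D \times \{0\}$ on $M_\varphi^\tau$, and (c3) from Step 1 together with $\Phi$ preserving the section.

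\textbf{Step 3 (volume and contact isotopy).} Property (c4) follows from the general formula (\ref{vol-tau}) applied to $\beta$ with its global section $r\D \times \{0\}$ (where $\beta$ restricts to $\lambda_0$):
\[
\mathrm{vol}(r\D \times \R/L\Z, \beta \wedge d\beta) = \int_{r\D} \tau \, d\lambda_0 = \int_{r\D}(L + \sigma_{\varphi,\lambda_0})\, dx\wedge dy = L\pi r^2 + \mathrm{CAL}(\varphi).
\]
For (c5), I would run the whole construction along the Hamiltonian isotopy $\{\varphi_u\}$, possibly after reparametrizing or subdividing it so that $\tau_u := L + \sigma_{\varphi_u,\lambda_0} > 0$ holds uniformly (which is arrangeable since $\tau_0 = L$ and $\tau_1 = \tau$ are both positive and positivity is an open condition). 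The resulting family $\beta_u$ is a smooth path of contact forms from $\lambda_0 + ds$ to $\beta$, all agreeing with $\lambda_0 + ds$ in a fixed boundary collar.

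\textbf{Main obstacle.} The technical heart of the argument is the construction of the diffeomorphism $\Phi$ in Step 2. The Hamiltonian isotopy naturally handles the monodromy $\varphi$, but the non-constancy of the shift $\tau$ — which is what allows $\lambda_0 + ds$ to descend in Step 1 — forces one to also reparametrize the $s$-coordinate fiberwise by an ODE. Making $\Phi$ simultaneously smooth, globally bijective, the identity in a boundary collar, and pointwise identity on $r\D \times \{0\}$ is the most delicate point; once this is achieved, all the listed properties of $\beta$ follow mechanically from those of $\tilde\beta$ and the volume formula.
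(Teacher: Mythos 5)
Your quotient-of-cylinder construction reproduces the approach of the cited reference [ABHS17b, Proposition~3.1]: in this paper Proposition~\ref{prop2} is established merely by a rescaling reduction to the $r=1$ case, with the actual construction deferred to that source. Step~1 of your proposal is correct and is indeed the central observation: the descent of $\lambda_0+ds$ along $(z,s)\sim(\varphi(z),s-\tau(z))$ is exactly the cohomological identity $\varphi^*\lambda_0-\lambda_0=d\tau$ forced by $\tau=L+\sigma_{\varphi,\lambda_0}$, and your reading off of the return map, return time and of (c4) from the flow-box volume formula is sound.

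However, two real gaps remain. The first you flag yourself: Step~2 only sketches the equivariant lift $\tilde\Phi:r\D\times\R\to r\D\times\R$ and leaves the compensating ODE for the $s$-component unwritten; in particular it is not checked that the resulting map is a global diffeomorphism equal to the identity near $\partial(r\D)\times\R/L\Z$ and pointwise on $r\D\times\{0\}$ simultaneously. The second, which you do not flag, is in (c5): running the construction along a Hamiltonian isotopy $\{\varphi_u\}_{u\in[0,1]}$ requires $\tau_u:=L+\sigma_{\varphi_u,\lambda_0}>0$ for \emph{all} $u$, so that each $M_{\varphi_u}^{\tau_u}$ is a genuine smooth solid torus. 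This does not follow from $\tau_0=L>0$, $\tau_1=\tau>0$ and openness of positivity: nothing prevents $\min_{r\D}\tau_u$ from dipping below $0$ at intermediate times for a generic isotopy. To fix this one must either choose the Hamiltonian isotopy with a controlled action bound (for instance by factoring $\varphi$ into maps of small action and concatenating the corresponding isotopies), or temporarily enlarge $L$ to $L'$ during the deformation so that positivity is automatic and then reconcile the two solid tori $r\D\times\R/L\Z$ and $r\D\times\R/L'\Z$ by an $s$-rescaling. As written, (c5) is a genuine gap.
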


Again, Proposition 3.1 in \cite{abhs17b} is stated for $r=1$, but the general case follows by rescaling. In fact consider $r,L$ and $\varphi$ as in the statement of Proposition~\ref{prop2}. Then, as explained above, we have $\sigma_{\varphi_{r^{-1}},\lambda_0}(z) = r^{-2} \sigma_{\varphi,\lambda_0}\left( rz \right)$. Applying~\cite[Proposition~3.1]{abhs17b} to the map $\varphi_{r^{-1}}$ with $Lr^{-2}$ in the place of $L$ we get a contact form $\beta_{r^{-1}}$ on $\D\times \R/Lr^{-2}\Z$ satisfying the desired conclusions. Consider now the diffeomorphism $\Phi_{r^{-1}}:r\D\times\R/L\Z \to \D\times\R/Lr^{-2}\Z$ defined as $(z,s) \mapsto (r^{-1}z,r^{-2}s)$. Direct calculations reveal that $\beta = r^2\Phi_{r^{-1}}^*\beta_{r^{-1}}$ is the desired contact form.

The statement of \cite[Proposition 3.1]{abhs17b} is actually slightly more general, since it allows $\lambda_0$ to be replaced by a more general primitive of $dx\wedge dy$, and gives more properties of the contact form $\beta$.

Building on the above two propositions, it is now easy to prove Proposition \ref{prop0}.

\begin{proof}[Proof of Proposition \ref{prop0}]
The first step is to prove the existence of a contact form $\beta_0$ on $r\D \times \R/\Z$ which satisfies the required conditions (a1)-(a4), but where in (a1) and (a2) the primitive $\lambda$ of $dx\wedge dy$ is the radially symmetric primitive $\lambda_0$.
Let $n\in \N$, $\varphi\in \mathrm{Diff}_c(\mathrm{int}(r\mathbb{D}),dx\wedge dy)$ be given by an application of Proposition \ref{prop1} with $L=1$. By statement (b1) in this proposition, we have the lower bound
\begin{equation}
\label{azionebuona}
1 + \sigma_{\varphi,\lambda_0} \geq \frac{1}{n}.
\end{equation}
In particular, the function
\[
\tau:= 1 + \sigma_{\varphi,\lambda_0}
\]
is everywhere positive. Then Proposition \ref{prop2} implies the existence of a contact form $\beta_0$ on the solid torus $r\mathbb{D} \times \R/\Z$ which satisfies the conditions (c1)-(c5) with $L=1$. We just need to check that $\beta_0$ satisfies (a1)-(a4) with respect to $\lambda_0$. Conditions (a1) and (a2) are precisely (c1) and (c5). Condition (a4) follows from (c4) and (b2):
\[
\mathrm{vol}\,(r\D \times \R/ \Z, \beta_0\wedge d\beta_0) \stackrel{(c4)}{=} \pi r^2 + \mathrm{CAL}(\varphi)  \stackrel{(b2)}{<} \epsilon.
\]

There remains to prove that all closed orbits of the Reeb flow of $\beta_0$ have period at least 1. By (c2) and (c3), closed orbits of $R_{\beta_0}$ are in one-to-one correspondence with periodic points of $\varphi$, and if $z\in r\mathbb{D}$ is a periodic point of $\varphi$ with minimal period $k\in \N$, then the corresponding closed orbit has period
\[
T:= \sum_{j=0}^{k-1} \tau(\varphi^j(z)).
\]
When $k=1$, $z$ is a fixed point of $\varphi$ and by (b3) we have
\[
T=\tau(z) = 1 + \sigma_{\varphi,\lambda_0}(z) \geq 1.
\]
By condition (b4), all periodic points of $\varphi$ which are not fixed points have period $k\geq n$. If $z$ is such a point, condition (b1) gives us
\[
T = \sum_{j=0}^{k-1} \tau(\varphi^j(z)) = \sum_{j=0}^{k-1} \bigl(1+\sigma_{\varphi,\lambda_0}(\varphi^j(z))\bigr) \geq \sum_{j=0}^{k-1} \frac{1}{n} = \frac{k}{n} \geq 1.
\]
This proves (a3) and concludes the first step.

Now we wish to modify $\beta_0$ in a neighborhood of the boundary in order to obtain (a1) and (a2) with respect to the given primitive $\lambda$ of $dx\wedge dy$, while keeping conditions (a3) and (a4). The 1-form $\lambda-\lambda_0$ is closed and hence exact on $r\D$. Let $u$ be a smooth function on $r\D$ such that 
\[
\lambda - \lambda_0 = du.
\]
Let $\{\beta_0^t\}_{t\in [0,1]}$ be a smooth path of contact forms on $r\D \times \R/\Z$ going from $\beta_0$ to $\lambda_0+ds$ and agreeing with $\lambda_0+ds$ in a neighborhood $(r\D \setminus r'\D)\times \R/\Z$ of the boundary of $r\D \times \R/\Z$, where $r'\in (0,r)$. Let $\chi$ be a smooth function on $r\D$ such that $\chi=0$ on $r'\D$ and $\chi=1$ on a neighborhood of $\partial (r\D)$. Consider the following smooth contact form on $r\D\times \R/\Z$:
\[
\beta := \beta_0 + d(\chi u).
\]
This contact form satisfies (a1) with respect to $\lambda$. The formula
\[
\beta^t :=  \beta_0^t + d(\chi u)
\]
defines a smooth path of contact forms on $r\D \times \R/\Z$ going from $\beta$ to the 1-form
\[
\lambda_0 + ds+d(\chi u),
\]
and agreeing with $\lambda+ds$ in a neighborhood of the boundary of $r\D \times \R/\Z$. The latter contact form can be joined to $\lambda + ds$ by the smooth isotopy
\[
\lambda_0 + t\, du + (1-t) d(\chi u) + ds,
\]
which consists of contact forms on $r\D \times \R/\Z$ agreeing with $\lambda+ds$ in a neighborhood of the boundary of $r\D \times \R/\Z$. This proves that $\beta$ satisfies (a2) with respect to $\lambda$.

The volume form induced by $\beta$ is
\[
\beta \wedge d\beta = \beta_0 \wedge d\beta_0 + d(\chi u) \wedge d\beta_0.
\]
The 3-form $d(\chi u) \wedge d\beta_0$ vanishes identically, because $d(\chi u)$ is supported in the region in which $d\beta_0=d\lambda_0$ and the contractions of this 1-form and this 2-form along $\partial_s$ are both zero.
We deduce that $\beta\wedge d\beta = \beta_0 \wedge d\beta_0$ on the whole $r\D\times \R/\Z$, and the fact that $\beta_0$ satisfies (a4) implies that also $\beta$ does.

The fact that $\beta$ differs from $\beta_0$ by an exact 1-form which vanishes along the direction of $R_{\beta_0}$ implies that the Reeb vector field of $\beta$ coincides with the one of $\beta_0$. Therefore, the fact that $\beta_0$ satisfies (a3) implies that also $\beta$ does.
\end{proof}

\section{Proof of the main theorem}
\label{theproof}

We are now ready to prove the theorem stated in the introduction. 

Let $\xi$ be a contact structure on the closed three-manifold $M$. Let $S$ be the smoothly embedded compact surface $S\subset M$ given by Proposition  \ref{prop_nice_contact_form} and let $\epsilon<1$ be a fixed positive number. By Proposition \ref{prop_nice_contact_form} we can find a contact form $\tilde{\alpha}$ on $M$ such that $\ker \tilde{\alpha} = \xi$, $S$ is a global surface of section for the Reeb flow of $\tilde{\alpha}$ and statements (i)-(iv) hold, where $\alpha$ is renamed as $\tilde{\alpha}$. In the following, $U$, $\varphi$ and $\tau$ are the objects appearing in this proposition. Statements labeled by a roman number refer to the statements of this proposition.

Denote by $C_1,\dots,C_{\ell}$ the circles forming the boundary of $S$. Each connected component of $U$ is a half-open annulus $A_j$ and $d\tilde{\alpha}$ restricts to an area form on the interior of each $A_j$. Let $a_j>0$ be the $d\tilde{\alpha}$-area of each $A_j$. By (i) and (iii) we have
\begin{equation}
\label{area} 
\ell = \int_S d\tilde{\alpha} = \int_{S\setminus U} d\tilde{\alpha}+ \sum_{j=1}^{\ell} \int_{A_j} d\tilde{\alpha} < \epsilon + \sum_{j=1}^{\ell} a_j.
\end{equation}
For any $j\in \{1,\dots,\ell\}$, let $r_j>0$ be such that
\[
\pi r_j^2 = (1-\epsilon) a_j,
\]
and let 
\[
\psi_j: r_j \mathbb{D} \hookrightarrow A_j \setminus \partial S
\]
be a smooth embedding such that 
\[
\psi_j^*(d\tilde{\alpha}) = dx\wedge dy.
\]
Such an area-preserving embedding exists because the Euclidean area of $r_j \D$ is smaller than the $d\tilde{\alpha}$-area of $A_j$. Using the fact that $\varphi$ is the identity and $\tau=1$ on each $A_j$, we can use the Reeb flow of $\tilde{\alpha}$ to lift each $\psi_j$ to an embedding
\[
\Psi_j: r_j \D \times \R/\Z \hookrightarrow M
\]
such that
\begin{equation}
\label{lift}
\Psi_j(\cdot,0) = \psi_j \qquad \mbox{and} \qquad \Psi_j^*(R_{\tilde{\alpha}}) = \partial_s,
\end{equation}
where $s$ denoted the coordinate on $\R/\Z$.
We claim that
\begin{equation}
\label{claim}
\Psi_j^*(\tilde{\alpha}) = \lambda_j + ds
\end{equation}
for some primitive $\lambda_j$ of $dx\wedge dy$ on $r_j\D$. Indeed, set for simplicity $\tilde{\alpha}_j:= \Psi_j^*(\tilde{\alpha})$. The the second identity in (\ref{lift}) implies that the Reeb vector field of $\tilde{\alpha}_j$ is $\partial_s$. Since any contact form is invariant by its Reeb flow, $\tilde{\alpha}_j$ is invariant under the translations $(x,y,s) \mapsto (x,y,s+t)$ and hence has the form
\[
\tilde{\alpha}_j = \lambda_j + u_j(x,y)\, ds,
\]
where $\lambda_j$ is a 1-form on $r_j \D$ and $u_j$ is a function on $r_j\D$. Then the condition $\tilde{\alpha}_j(\partial_s)=1$ implies that $u_j=1$. Finally, the first identity in (\ref{lift}) implies that 
\[
d\lambda_j = d\tilde{\alpha}_j|_{r_j \D \times \{0\}} = \psi_j^*(d\tilde{\alpha}) = dx\wedge dy,
\]
which concludes the proof of the claim.

Denote by $W_j$ the image of the embedding $\Psi_j$. Its volume with respect to $\tilde{\alpha} \wedge d\tilde{\alpha}$ is
\[
\begin{split}
\mathrm{vol}\,(  W_j , \tilde{\alpha}\wedge d\tilde{\alpha}) &=\mathrm{vol}\,( r_j \D \times \R/\Z, \tilde{\alpha}_j \wedge d\tilde{\alpha}_j ) = \mathrm{vol}\,( r_j \D \times \R/\Z, d\lambda_j\wedge ds) \\ &=  \mathrm{vol}\,( r_j \D \times \R/\Z,dx\wedge dy\wedge ds) = \pi r_j^2 = (1-\epsilon)a_j.
\end{split}
\]
By (\ref{vol-tau}), (i) and (iv), the total volume of $M$ with respect to $\tilde{\alpha}\wedge d\tilde{\alpha}$ has the upper bound
\[
\mathrm{vol}\,(M,\tilde{\alpha}\wedge d\tilde{\alpha}) = \int_S \tau\, d\tilde{\alpha} \leq (1+\epsilon) \int_S  d\tilde{\alpha} = (1+\epsilon) \ell.
\]
Since the sets $W_j$ are pairwise disjoint, being the saturations by the flow of pairwise disjoint sets on a global surface of section, the volume of the complement of their union has the upper bound
\begin{equation}
\label{volume_compl}
\begin{split}
\mathrm{vol}\,\Bigl(  M \setminus \bigcup_{j=1}^{\ell} &W_j , \tilde{\alpha}\wedge d\tilde{\alpha}\Bigr) = \mathrm{vol}\,(M,\tilde{\alpha}\wedge d\tilde{\alpha}) - \sum_{j=1}^{\ell} \mathrm{vol}\,(W_j,\tilde{\alpha}\wedge d\tilde{\alpha}) \\
&\leq (1+\epsilon) \ell - (1-\epsilon) \sum_{j=1}^{\ell} a_j <  (1+\epsilon) \ell - (1-\epsilon)(\ell - \epsilon) \\ &= \epsilon(2\ell+1) - \epsilon^2 < \epsilon(2\ell+1),
\end{split}
\end{equation}
where the second inequality follows from (\ref{area}). 

Thanks to Proposition \ref{prop0}, on every solid torus $r_j \D\times \R/\Z$ there is a contact form $\beta_j$ which agrees with $\lambda_j+ds$ near the boundary, has total volume less than $\epsilon$ and is such that all its closed Reeb orbits have period at least 1. Moreover, $\beta_j$ is smoothly isotopic to $\lambda_j+ds$ through contact forms which agree with $\lambda_j+ds$ near the boundary.

Let $\alpha$ be the contact form on $M$ which coincides with $\tilde{\alpha}$ outside of the union of the $W_j$ and on each $W_j$ is given by the push-forward of $\beta_j$ by the embedding $\Psi_j$. This form is indeed smooth due to (\ref{claim}). It is smoothly isotopic to the contact form $\tilde{\alpha}$, and hence by Gray stability its kernel is diffeomorphic to the given structure $\xi=\ker \tilde{\alpha}$. By (\ref{volume_compl}) and the fact that the volume of each $r_j \D\times \R/\Z$ with respect to $\beta_j\wedge d\beta_j$ is smaller than $\epsilon$, the volume of $M$ with respect to $\alpha\wedge d\alpha$ has the upper bound
\begin{equation}
\label{volume}
\begin{split}
\mathrm{vol}\,(M,\alpha\wedge d\alpha) &= \mathrm{vol}\,\Bigl(  M \setminus \bigcup_{j=1}^{\ell} W_j , \tilde{\alpha}\wedge d\tilde{\alpha}\Bigr) + \sum_{j=1}^{\ell} \mathrm{vol}\,(r_j \D\times \R/\Z,\beta_j\wedge d\beta_j) \\ &< \epsilon(2\ell+1) + \epsilon \ell = \epsilon(3\ell+1).
\end{split}
\end{equation}

The sets $W_j$ are invariant for the Reeb flow of $\alpha$. The closed orbits which are contained in the $W_j$'s have period at least 1, thanks to the corresponding property of the $\beta_j$'s. The components of the boundary of $S$ are closed orbits of period 1, thanks to statement (i) in Proposition \ref{prop_nice_contact_form}. Statement (iv) in the same proposition tells us that all the other closed orbits of the Reeb flow of $\alpha$ have period larger than $1-\epsilon$. We conclude that
\[
T_{\min}(\alpha) > 1-\epsilon.
\]
Together with (\ref{volume}), we deduce that the systolic ratio of $\alpha$ has the lower bound
\[
\rho_{\mathrm{sys}}(\alpha) = \frac{T_{\min}(\alpha)^2}{\mathrm{vol}\,(M,\alpha\wedge d\alpha)} > \frac{(1-\epsilon)^2}{\epsilon(3\ell+1)}.
\]
Since the latter quantity tends to $+\infty$ for $\epsilon\rightarrow 0$, the systolic ratio of a contact form inducing the contact structure $\xi$ on $M$ can be made arbitrarily large. This concludes the proof of the theorem stated in the introduction.


\providecommand{\bysame}{\leavevmode\hbox to3em{\hrulefill}\thinspace}
\providecommand{\MR}{\relax\ifhmode\unskip\space\fi MR }
\providecommand{\MRhref}[2]{%
  \href{http://www.ams.org/mathscinet-getitem?mr=#1}{#2}
}
\providecommand{\href}[2]{#2}

\end{document}